\title{Decomposition of bounded degree graphs into $C_4$-free subgraphs\footnote{\noindent This work was begun in April 2014 during the Workshop on Structural Graph Theory at McGill's Bellairs Institute. We warmly thank the organisers for the collaborative opportunity.}}
\author{Ross J. Kang\footnote{\noindent Applied Stochastics, IMAPP, Radboud University Nijmegen, Netherlands. \href{mailto:ross.kang@gmail.com}{\tt ross.kang@gmail.com}. This author is supported by a NWO Veni grant.}\and Guillem Perarnau\footnote{\noindent School of Computer Science, McGill University, Montreal, Canada. \href{mailto:guillem.perarnaullobet@mcgill.ca}{\tt guillem.perarnaullobet@mcgill.ca}.}}
\date{\today}
\theoremstyle{plain}
\newtheorem{theorem}{Theorem}
\newtheorem{lemma}[theorem]{Lemma}
\newtheorem{corollary}[theorem]{Corollary}
\theoremstyle{definition}
\newtheorem{indexedclaim}[theorem]{Claim}
\renewenvironment{proof}[1][Proof]{\begin{trivlist}
\item[\hskip\labelsep {\textit{#1}.}]}{\qed\end{trivlist}}
\newcommand{\G}{\mathcal{G}}
\newcommand{\E}{\mathbb{E}}
\renewcommand{\a}{\alpha}
\newcommand{\ex}{\text{ex}}
\begin{document}

\pagenumbering{arabic}

\setcounter{section}{0}

\maketitle

\begin{abstract}
We prove that every graph with maximum degree $\Delta$ admits a partition of its edges into $O(\sqrt{\Delta})$ parts (as $\Delta\to\infty$) none of which contains $C_4$ as a subgraph. This bound is sharp up to a constant factor. Our proof uses an iterated random colouring procedure.
\end{abstract}

\begin{quote}
\scriptsize Keywords: edge-colouring, graph decomposition, graphs of bounded maximum degree, degree Ramsey numbers, bipartite Tur\'an numbers.
MSC: 05C15, 05D40, 05C35, 05C55.
\end{quote}

\section{Introduction}

In this paper we consider the following question.
\begin{quote}
\em
Given a graph $G = (V,E)$ with maximum degree $\Delta$,
into how few parts can we partition $E$ so that no part has a $C_4$ subgraph?
\end{quote}
More generally, for any graph $H$ with at least two edges, given $G = (V,E)$ and a map $f:E\to [m]$ for some positive integer $m$, we call $f$ an {\em $H$-free (edge-)colouring of $G$ with $m$ colours} if there is no $i\in [m]$ such that the graph $(V,f^{-1}(i))$ contains $H$ as a subgraph.
(Note that this is not necessarily a proper colouring unless $H$ is a two-edge path.)
Let $\phi_H(G)$ be the least $m$ such that $G$ admits an $H$-free colouring with $m$ colours.

Using this notation, the above asks specifically about $\phi_{C_4}$, and in answer we show the following.
\begin{theorem}\label{thm:main}
    For every graph $G$ with maximum degree $\Delta$,
    $\phi_{C_4}(G)= O(\sqrt{\Delta})$ as $\Delta\to\infty$.
\end{theorem}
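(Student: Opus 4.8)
The plan is to use an iterated random colouring procedure, as advertised in the abstract. The key combinatorial fact driving the argument is that a graph with maximum degree $\Delta$ has at most $O(\Delta^2)$ copies of $C_4$ through any fixed vertex, and more usefully, each edge lies in at most $O(\Delta^2)$ copies of $C_4$; meanwhile the total number of copies of $C_4$ in $G$ is $O(n\Delta^3)$ while the number of edges is $\Theta(n\Delta)$. So if we colour each edge independently and uniformly with one of $m$ colours, a fixed $C_4$ becomes monochromatic with probability $m^{-3}$, and the expected number of monochromatic $C_4$'s is $O(n\Delta^3 m^{-3})$. Taking $m = \Theta(\sqrt{\Delta})$ makes this roughly $n\Delta^{3/2}$, which is comparable to (a constant times) the number of edges; this is the regime in which one can hope to ``absorb'' the bad copies by recolouring.

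First I would set up one round of the procedure precisely: fix a target number of colours $m = C\sqrt{\Delta}$ for a large constant $C$, colour $E(G)$ uniformly at random, and let $B$ be the set of edges lying in at least one monochromatic $C_4$. The goal of the round is to show that, with positive probability, (i) each colour class is ``locally sparse'' in a suitable sense, and (ii) the bad edge set $B$ induces a subgraph of much smaller maximum degree, say $\Delta' \le \Delta^{1-\delta}$ or at least $\Delta' = o(\Delta)$ with a good enough rate. Claim (ii) would be proved by a concentration/Lov\'asz Local Lemma argument: the expected number of bad edges at a vertex $v$ is at most (number of $C_4$'s through an edge at $v$) times $m^{-3}$, which is $O(\Delta^2 \cdot \Delta^{-3/2}) = O(\sqrt{\Delta})$ — wait, that is not automatically $o(\Delta)$, so one must be more careful: the relevant quantity is the number of bad \emph{edges} incident to $v$, and an edge $uv$ is bad only if it lies in a monochromatic $C_4$, an event of probability $O(\Delta^2 m^{-3})$, so the expected bad degree of $v$ is $O(\Delta \cdot \Delta^2 m^{-3}) = O(\Delta^3 m^{-3})$. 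With $m = C\sqrt\Delta$ this is $O(\Delta^{3/2}/C^3) = o(\Delta)$ once $C$ is large, and in fact is a small power less than $\Delta$ only if we push $m$ slightly super-$\sqrt\Delta$, so the cleanest setup iterates with $m$ of order $\sqrt{\Delta}$ at the first step and then geometrically decreasing degree bounds.

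The iteration then proceeds as follows: after round $i$ we have a graph $G_i$ of maximum degree $\Delta_i$ (with $\Delta_0 = \Delta$, $\Delta_{i+1} = o(\Delta_i)$, say $\Delta_{i+1} \le \Delta_i^{2/3}$ after absorbing constants), consisting of the still-uncoloured (bad) edges, and we recolour $G_i$ with a fresh palette of $m_i = O(\sqrt{\Delta_i})$ colours, again extracting a bad subgraph $G_{i+1}$. Since $\sum_i \sqrt{\Delta_i}$ is a convergent geometric-type series dominated by its first term $O(\sqrt{\Delta})$, the total number of colours used across all rounds is $\sum_i m_i = O(\sqrt{\Delta})$. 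We run this for $O(\log\log\Delta)$ rounds until $\Delta_i$ is below an absolute constant, at which point $G_i$ has bounded degree and bounded-degree graphs have $\phi_{C_4} = O(1)$ trivially (e.g. properly edge-colour). The one subtlety to check at each round is that a monochromatic $C_4$ in the \emph{final} colouring cannot be created retroactively: since each round uses a disjoint palette and a $C_4$ uses $4$ edges, if all four edges of a $C_4$ survive to round $i$ they get colours from palette $i$ and the round's guarantee handles them, while if some edge was coloured in an earlier round $j<i$ then that $C_4$ is not monochromatic because palettes are disjoint — so the rounds genuinely compose.

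The main obstacle I expect is establishing the degree-reduction step (ii) with a \emph{useful} rate rather than merely $o(\Delta)$: a naive first-moment plus Local Lemma bound gives bad degree $\approx \Delta^{3}/m^{3}$, which for $m=\Theta(\sqrt\Delta)$ is $\Theta(\Delta^{3/2})$ — larger than $\Delta$! — so one genuinely cannot afford to pay $m^{-3}$ against $\Delta^3$ worth of $C_4$'s at a single vertex in one shot. The resolution, and the technical heart of the paper, must be to choose $m$ a \emph{constant factor} larger than $\sqrt\Delta$ so that the \emph{fraction} of edges that are bad is a small constant $\eta = \eta(C)$ (this needs $m^{-3}\cdot(\text{avg }C_4\text{'s per edge}) = O(\Delta^2/m^3) \le \eta$, i.e.\ $m \ge (\Delta^2/\eta)^{1/3}$, which is $o(\sqrt\Delta)$ — so actually a sublinear-in-$\sqrt\Delta$ palette already makes a constant fraction of edges good), and then use a more refined argument — partitioning the vertex set, or a two-stage exposure, or the entropy-compression/R\"odl-nibble philosophy — to argue that the bad edges cannot concentrate too heavily at any vertex, yielding $\Delta_{i+1} = O(\Delta_i / \log \Delta_i)$ or a genuine power saving. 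I would aim first to prove the clean statement ``$O(\sqrt\Delta)$ colours make a $(1-\eta)$-fraction of edges good, with bad part of max degree $\le \Delta/2$'' via the Local Lemma, iterate $O(\log\Delta)$ times to kill the degree, and only afterwards optimise the number of rounds; getting the Local Lemma dependencies to close (the bad event for a $C_4$ depends on $O(\Delta^3)$ other $C_4$'s, each of probability $m^{-3}=\Theta(\Delta^{-3/2})$, giving $e p d \approx e\cdot\Delta^{-3/2}\cdot\Delta^3 = e\Delta^{3/2} \gg 1$) will be the crux, and likely forces the vertex-local reformulation of the bad events rather than a per-$C_4$ formulation.
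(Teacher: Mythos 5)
Your proposal is not just a different route from the paper's --- it has a genuine gap at its core, and your own calculations already expose it. The entire plan rests on a per-round degree-reduction step: colour the edges uniformly at random with $m=C\sqrt{\Delta}$ colours and show the ``bad'' subgraph (edges in monochromatic $C_4$'s) has much smaller maximum degree. But as you compute, the expected bad degree at a vertex is of order $\Delta^3/m^3=\Theta(\Delta^{3/2}/C^3)$, which exceeds $\Delta$ and so says nothing; and the per-$C_4$ Local Lemma fails ($pD\approx m^{-3}\cdot\Delta^3\gg1$). Your proposed repair contains an arithmetic slip: to make the per-edge bad probability $\Delta^2/m^3$ a small constant you need $m=\Omega(\Delta^{2/3})$, and $\Delta^{2/3}$ is \emph{larger} than $\sqrt{\Delta}$, not smaller. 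Consequently, even if the constant-fraction-good version were pushed through and iterated with geometrically shrinking degrees, the total palette size would be $\sum_i O(\Delta_i^{2/3})=O(\Delta^{2/3})$, which is weaker than the previously known bound $O(\Delta^{9/14})$, let alone $O(\sqrt{\Delta})$. The deeper obstruction is structural: with $\Theta(\sqrt{\Delta})$ colours each colour class has average degree $\Theta(\sqrt{\Delta})$, which is exactly the extremal density for $C_4$-freeness, so a uniformly random sparsification cannot be expected to be (close to) $C_4$-free --- only very rigid, incidence-geometry-like classes achieve that density. The unspecified ``refined argument'' (nibble, entropy compression, vertex-local events) that you defer to is precisely the missing content, and no first-moment reformulation of the same random edge-colouring will supply it.

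The paper proceeds quite differently, and the contrast shows what is needed. Rather than colouring edges directly, it passes to a spanning bipartite subgraph (a maximum cut), and builds --- via a two-round randomized procedure analysed with the Simple Concentration Bound and the Local Lemma applied to \emph{vertex-local} quantities --- a spanning subgraph $H$ retaining a constant fraction of every degree, together with a proper, $1$-frugal vertex colouring with $O(\Delta)$ colours (every neighbourhood rainbow). This makes every $4$-cycle of $H$ rainbow in colour space, so one can pull back a $C_4$-free decomposition of the complete graph on the $t=O(\Delta)$ colours into $O(\sqrt{t})=O(\sqrt{\Delta})$ classes (Chung--Graham/Irving, via projective planes) to decompose $H$ itself. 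Iterating on what remains gives geometrically decreasing degrees, hence a convergent $O(\sqrt{\Delta})$ total, and the low-degree remainder is finished off by a degeneracy/arboricity argument. In short, the problem is reduced to the complete-graph case $\phi_{C_4}(K_{\Delta+1})=O(\sqrt{\Delta})$, where algebraic constructions provide the extremal $C_4$-free classes that a uniformly random edge colouring cannot.
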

In words, every graph with maximum degree $\Delta$ admits a partition of its edges (also called a decomposition) into $O(\sqrt{\Delta})$ $C_4$-free subgraphs.

Let $K_n$ be the complete graph on $n$ vertices. By an upper bound on the size of every colour class in an $H$-free colouring of $K_{\Delta+1}$, we have that
\begin{align}\label{eq:lower}
\phi_{H}(K_{\Delta+1})\geq\frac{\binom{\Delta+1}{2}}{\ex(\Delta+1,H)}\;,
\end{align}
where $\ex(n,H)$ as usual denotes the maximum number of edges in an $H$-free graph on $n$ vertices.
Then it follows from an old result of Erd\H{o}s~\cite{Erd38} on the extremal number of $C_4$ (see~\cite{Pik12} for context and more detailed results) that $\phi_{C_4}(K_{\Delta+1})=\Omega(\sqrt{\Delta})$. This not only shows Theorem~\ref{thm:main} to be best possible up to a constant factor, but also foreshadows a central role of the complete graph.

For broader context, Theorem~\ref{thm:main} may be understood in terms of the degree Ramsey numbers as first considered in the 1970s by Burr, Erd\H{o}s and Lov\'asz~\cite{BEL76} --- they studied these numbers for complete graphs and stars. The more general setting for other graphs was recently revisited in~\cite{KMW12}. The question we posed at the beginning is equivalent to finding the multicolour degree Ramsey number of $C_4$. In~\cite{JMW13} it was shown that $\phi_{C_4}(G)=O(\Delta^{9/14})$ for graphs of maximum degree $\Delta$, and the authors asked for the right order of growth. Theorem~\ref{thm:main} settles this.

We prove Theorem~\ref{thm:main} in Section~\ref{sec:main} by using the probabilistic method. In particular, we use an iterated random colouring procedure. At each step of the procedure we identify a collection of large $C_4$-free colour classes, the removal of which significantly reduces the maximum degree of the graph (see Corollary~\ref{cor:decomp}).
In the proof, we deliberately make little effort to optimise constants, but we note here that it is possible to obtain a factor less than $45$ in Theorem~\ref{thm:main} by  being more careful at a few points.

Recently, together with Bruce Reed~\cite{PeRe14+}, the second author proved that every $\Delta$-regular graph $G$ contains a spanning $C_4$-free graph with minimum degree $\Omega(\sqrt{\Delta})$.
This result has some similarity to our Corollary~\ref{cor:decomp}, where instead of looking at the minimum degree of the resulting subgraph, they look at the minimum degree of a given colour class. In a way that is analogous to their work, we essentially reduce our considered problem to the determination of $\ex(\Delta+1,C_4)$. (For us, this is reminiscent of the relationship between independence number and chromatic number found in other extremal colouring problems.)

More generally, we ask the following.
\begin{quote}
\em
For any graph $H$ with at least two edges, is it true that $\phi_H(G)=O(\phi_H(K_{\Delta+1}))$ for every graph $G$ with maximum degree $\Delta$?
\end{quote}
Otherwise stated, we ask if the complete graph on $\Delta+1$ vertices is essentially the hardest graph to $H$-free colour among all the graphs with maximum degree $\Delta$.

Trivially, this holds for $H$ a two-edge path.
Theorem~\ref{thm:main} shows this to be true for $H=C_4$.
Using the methods in the proof of Theorem~\ref{thm:main}, it is possible to confirm this for other bipartite graphs $H$ such as cycles of order twice a prime, or complete bipartite graphs. Moreover, for every $g\geq 4$, we can also edge-colour graphs of maximum degree $\Delta$, each colour class having girth at least $g$, with an asymptotically tight number of colours. We encourage the reader to consult~\cite{PeRe14+} to see a concrete discussion of how Theorem~\ref{thm:v4} can be used to upper bound $\phi_H(G)$ for other bipartite graphs $H$.

Another problem strongly related to our result (via the above displayed question) is to determine $\phi_H(K_n)$. Inequality~\eqref{eq:lower} provides a lower bound on $\phi_H(K_n)$ in terms of $\ex(n,H)$. This prompts us to ask 
for which graphs $H$ we have $\phi_H(K_{n})=O(n^2/\ex(n,H))$ (as $n\to\infty$).

This last statement does not hold if $H$ is not bipartite. On the one hand, Tur\'an's theorem implies that $\ex(n,H)=\Omega(n^2)$. On the other hand, it can be shown in this case that $\phi_H(K_{n})=\Omega(\log\log{n})$. First observe that $\phi_H(K_{n})\geq \phi_{H'}(K_{n})$ for any $H\subseteq H'$. Write $|V(H)|=k$ for some fixed $k\ge3$. The Erd\H{o}s--Szekeres bound on two-colour Ramsey numbers gives that $R(k,\ell)\leq \binom{k+\ell-2}{k-1} = O(\ell^{k-1})$, so every $K_k$-free graph of order $n$ has an independent set of size $\Omega(n^{1/(k-1)})$. Let $m=\phi_{K_k}(K_{n})$ and let $G_1,\dots,G_m$ denote the colour classes of a $K_k$-free colouring of $K_{n}$ with $m$ colours. Beginning with $V_0=V$, define $V_{i}$ to be a maximum independent set of $G_{i}[V_{i-1}]$ for every $0<i\le m$. Then $|V_i|=\Omega(n^{(k-1)^{-i}})$, which implies $m=\Omega(\log\log{n})$, as claimed.

Nevertheless, $\phi_H(K_{n})=O(n^2/\ex(n,H))$ for some bipartite graphs $H$ such as $C_4$~\cite{ChGr75,Irv74}, $C_6$ and $C_{10}$~\cite{LL09}.

 Bounding or determining the Tur\'an number of bipartite graphs is a central problem in extremal graph theory (see again~\cite{Pik12} or, more generally,~\cite{FuSi13}), so determining for bipartite $H$ the right order of $\phi_H(G)$ in terms of $\Delta(G)$ might be difficult in general.

\section{Some probabilistic tools}

For our proof we need the following lemmas, the uses of which are covered extensively in~\cite{MoRe02}.

\begin{lemma}[Simple Concentration Bound]
Let $X$ be a random variable determined by $n$ trials $T_1,\dots,T_n$ such that for each $i$, and any two possible sequences of outcomes $t_1,\dots,t_i,\dots,t_n$ and  $t_1,\dots,t'_i,\dots,t_n$,
$$
|X(t_1,\dots,t_i,\dots,t_n)-X(t_1,\dots,t'_i,\dots,t_n)|\leq c\;.
$$
Then
\[
\Pr(|X-\E(X)|>t)\leq 2 e^{-t^2/(2c^2n)}\;.
\]
\end{lemma}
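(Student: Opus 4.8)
The plan is to deduce this from Azuma's martingale inequality via the standard Doob (``exposure'') martingale argument. First I would introduce the filtration generated by the trials: for $0\le i\le n$ set $X_i:=\E(X\mid T_1,\dots,T_i)$, so that $X_0=\E(X)$ is constant, $X_n=X$, and $(X_i)_{i=0}^n$ is a martingale with respect to the sequence of trials. Writing $Z_i:=X_i-X_{i-1}$, we have $X-\E(X)=\sum_{i=1}^n Z_i$, and the whole argument hinges on the claim that $|Z_i|\le c$ almost surely for every $i$.

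To prove $|Z_i|\le c$, I would condition on a fixed outcome $t_1,\dots,t_{i-1}$ of the first $i-1$ trials and consider the function $g(t):=\E\big(X\mid T_1=t_1,\dots,T_{i-1}=t_{i-1},\,T_i=t\big)$, which, by independence of the trials, equals the average of $X(t_1,\dots,t_{i-1},t,T_{i+1},\dots,T_n)$ over the remaining trials. For any two values $t,t'$ the sequences $(t_1,\dots,t_{i-1},t,T_{i+1},\dots,T_n)$ and $(t_1,\dots,t_{i-1},t',T_{i+1},\dots,T_n)$ differ only in coordinate $i$, so the Lipschitz hypothesis on $X$ gives $|g(t)-g(t')|\le c$ pointwise, whence $g$ ranges over an interval of length at most $c$. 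Since, conditionally on the prefix, $X_i=g(T_i)$ while $X_{i-1}=\E(g(T_i))$ is its mean, both lie in that interval, and so $|Z_i|\le c$. I expect this to be the only delicate point: it is exactly here that the hypothesis ``$X$ is determined by the trials $T_1,\dots,T_n$'' must be used in the sense that the trials are independent, so that conditioning on a prefix leaves the distribution of $(T_{i+1},\dots,T_n)$ unchanged and the single-trial Lipschitz bound survives the averaging. Everything else is routine.

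With the bound $|Z_i|\le c$ in hand, I would finish with the usual Chernoff estimate for bounded martingales. Fix $\lambda>0$. Conditioning on $T_1,\dots,T_{i-1}$, convexity of $z\mapsto e^{\lambda z}$ on $[-c,c]$ together with $\E(Z_i\mid T_1,\dots,T_{i-1})=0$ yields $\E(e^{\lambda Z_i}\mid T_1,\dots,T_{i-1})\le\cosh(\lambda c)\le e^{\lambda^2c^2/2}$; iterating this through the tower property over $i=n,n-1,\dots,1$ gives $\E\big(e^{\lambda(X-\E X)}\big)\le e^{n\lambda^2c^2/2}$. Markov's inequality applied to $e^{\lambda(X-\E X)}$ then gives $\Pr(X-\E X>t)\le e^{-\lambda t+n\lambda^2c^2/2}$, and the choice $\lambda=t/(nc^2)$ makes the exponent equal to $-t^2/(2c^2n)$. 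Applying the same estimate to $-X$ (whose associated martingale has increments bounded by the same $c$) and summing the two one-sided probabilities gives $\Pr(|X-\E X|>t)\le 2e^{-t^2/(2c^2n)}$, as desired. A little more care --- noting that each $Z_i$ in fact ranges over an interval of length $c$ rather than merely lying in $[-c,c]$, and invoking Hoeffding's lemma --- improves the constant in the exponent, but the stated bound is all that is needed in the sequel.
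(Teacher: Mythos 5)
Your proof is correct: the Doob exposure martingale, the bound $|Z_i|\le c$ via averaging over the (independent) remaining trials, and the Chernoff--Azuma computation with $\lambda=t/(nc^2)$ do yield exactly the stated bound $2e^{-t^2/(2c^2n)}$, and you rightly flag independence of the trials as the point where the hypothesis is used. The paper itself gives no proof --- it quotes this Simple Concentration Bound as a standard tool from Molloy and Reed --- and the argument there is the same bounded-differences/Azuma argument you give, so your write-up is essentially the canonical proof of the cited lemma.
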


\begin{lemma}[Lov\'asz Local Lemma]\label{lem:LLL}
Consider a set $\mathcal{E}$ of events such that for each $E\in\mathcal{E}$
\begin{itemize}
\item $\Pr(E)\leq p< 1$, and
\item $E$ is mutually independent from the set of all but at most $D$ of other events.
\end{itemize}
If $4pD\leq 1$, then with positive probability none of the events in $\mathcal{E}$ occur.
\end{lemma}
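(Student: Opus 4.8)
The plan is to prove the symmetric Lovász Local Lemma in the usual way (the argument is given in detail in~\cite{MoRe02}), by an inductive estimate on conditional probabilities. Write $\mathcal{E}=\{E_1,\dots,E_n\}$; it suffices to treat a finite family, since the infinite case follows by a routine compactness argument. For each $i$ fix a set $N(i)$ of at most $D$ indices such that $E_i$ is mutually independent of $\{E_j:j\notin N(i)\cup\{i\}\}$. The heart of the proof is the following claim, established by induction on $|S|$: for every index $i$ and every $S\subseteq[n]\setminus\{i\}$,
\[
\Pr\Bigl(E_i \;\Big|\; \bigcap_{j\in S}\overline{E_j}\Bigr)\le 2p .
\]

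The base case $S=\emptyset$ is immediate from $\Pr(E_i)\le p$. For the inductive step I would split $S$ into $S_1=S\cap N(i)$ and $S_2=S\setminus N(i)$, so that $|S_1|\le D$, and write the target as a ratio,
\[
\Pr\Bigl(E_i \,\Big|\, \bigcap_{j\in S}\overline{E_j}\Bigr)
=\frac{\Pr\bigl(E_i\cap\bigcap_{j\in S_1}\overline{E_j}\;\big|\;\bigcap_{k\in S_2}\overline{E_k}\bigr)}{\Pr\bigl(\bigcap_{j\in S_1}\overline{E_j}\;\big|\;\bigcap_{k\in S_2}\overline{E_k}\bigr)} .
\]
The numerator is at most $\Pr\bigl(E_i\mid\bigcap_{k\in S_2}\overline{E_k}\bigr)=\Pr(E_i)\le p$, by mutual independence of $E_i$ from the events indexed by $S_2$. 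The denominator I would expand as a telescoping product over the (at most $D$) indices in $S_1$, each factor being $\Pr(\overline{E_j}\mid\cdots)$ with the conditioning event involving only indices in a proper subset of $S$; by the inductive hypothesis each such factor is at least $1-2p$, so the denominator is at least $(1-2p)^{|S_1|}\ge(1-2p)^D$. Thus $\Pr(E_i\mid\bigcap_{j\in S}\overline{E_j})\le p/(1-2p)^D$.

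The only genuinely computational point, and the place where the constant $4$ is spent, is to check that $4pD\le1$ forces $p/(1-2p)^D\le2p$, i.e.\ $(1-2p)^D\ge\tfrac12$. Since $D\ge1$, the hypothesis gives $2p\le\tfrac1{2D}\le\tfrac12$, so $(1-2p)^D\ge(1-\tfrac1{2D})^D$, and it is enough to show $(1-\tfrac1{2D})^D\ge\tfrac12$ for every integer $D\ge1$. This holds with equality when $D=1$, and for larger $D$ it follows from a one-line monotonicity check (the map $D\mapsto D\ln(1-\tfrac1{2D})$ is increasing, with limit $-\tfrac12>\ln\tfrac12$). Given the claim, the lemma follows by the chain rule:
\[
\Pr\Bigl(\bigcap_{i=1}^n\overline{E_i}\Bigr)=\prod_{i=1}^n\Pr\Bigl(\overline{E_i}\;\Big|\;\bigcap_{j<i}\overline{E_j}\Bigr)\ge(1-2p)^n>0 .
\]

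I expect the main obstacle to be careful bookkeeping rather than any real difficulty: one must verify that every conditioning event appearing in the denominator expansion is indexed by a strict subset of $S$, so that the induction is well-founded, and one must use \emph{mutual} independence (not merely pairwise independence) to justify that conditioning $E_i$ on an arbitrary Boolean combination of the $S_2$-events leaves $\Pr(E_i)$ unchanged. The numerical inequality $(1-\tfrac1{2D})^D\ge\tfrac12$ is elementary, and everything else is formal manipulation of conditional probabilities.
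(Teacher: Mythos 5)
This lemma is stated in the paper as a standard tool quoted from Molloy and Reed's book and is not proved there, so there is nothing to compare against beyond the classical argument itself. Your proof is exactly that classical inductive argument for the symmetric Local Lemma, and it is correct: the split $S=S_1\cup S_2$, the independence bound on the numerator, the telescoped denominator bounded below by $(1-2p)^{|S_1|}$, and the numerical verification that $4pD\le 1$ (with $D\ge 1$, hence $p\le\tfrac14$) gives $(1-2p)^D\ge(1-\tfrac{1}{2D})^D\ge\tfrac12$ (equality at $D=1$) all check out, and the final chain-rule product is positive since $2p\le\tfrac12$.
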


\section{Proof of Theorem~\ref{thm:main}}\label{sec:main}

Before proceeding with the main proof, let us first consider the complete graph $K_{\Delta+1}$.
It was shown in the 1970s independently by Chung and Graham~\cite{ChGr75} and by Irving~\cite{Irv74} that, if $\Delta=p^2+p+1$ for some prime power $p$, then $\phi_{C_4}(K_{\Delta+1})\le p+1$.

By the density of the primes, it follows easily that
\begin{align}\label{eq:clique_decomp}
\phi_{C_4}(K_{\Delta+1})\le\lceil2\sqrt{\Delta}\rceil\;,
\end{align}
for all large enough $\Delta$.
We later use this in the proof of Theorem~\ref{thm:main}.

Given a graph $G = (V,E)$, we say that a map $f:V\to [m]$ is {\em $1$-frugal} if it holds for all $i\in [m]$ and $v\in V$ that $|f^{-1}(i)\cap N(v)|\le 1$.
We may alternatively view a $1$-frugal map as a vertex colouring such that every neighbourhood is {\em rainbow}.
The engine in our proof of Theorem~\ref{thm:main} is the following result.
\begin{theorem}\label{thm:v4}
    Let $G=(V,E)$ be a graph with maximum degree $\Delta$ and minimum degree $\delta\geq\log^2{\Delta}$ with $\Delta$ sufficiently large. For every $\a > 16$, there exist $\beta=\beta(\a) > 0$, a spanning subgraph $H$ and a (vertex) $(2\lceil\a\Delta\rceil)$-colouring $\chi$ such that
\begin{itemize}
\item $d_H(v)\geq\beta d_G(v)$ for every $v\in V$ and
\item $\chi$ is $1$-frugal and proper in $H$.
\end{itemize}
\end{theorem}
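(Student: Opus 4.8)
The plan is to obtain $\chi$ by colouring the vertices at random and to let $H$ retain exactly those edges that are compatible with $\chi$ being proper and $1$-frugal. Put $C:=2\lceil\a\Delta\rceil$. Independently for each vertex $x$ pick a uniformly random linear order $\pi_x$ of $N(x)$, and independently give each vertex a uniformly random colour in $[C]$; call this $\chi$. Say that $x$ \emph{selects}, for each colour $c$ occurring on $N(x)$, its $\pi_x$-least neighbour of colour $c$, and let $H$ consist of the edges $uv$ with $\chi(u)\neq\chi(v)$ such that $v$ selects $u$ and $u$ selects $v$. Then $\chi$ is proper on $H$ by construction, and it is $1$-frugal on $H$, since two neighbours of $v$ in $H$ of a common colour $c$ would both be the vertex $v$ selects for $c$, hence equal. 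So the task is to show that with positive probability $d_H(v)\ge\beta d_G(v)$ for every $v$, for a suitable constant $\beta=\beta(\a)>0$; the random orders $\pi_x$ are present only to make $d_H(v)$ concentrated.

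I would first bound $\E(d_H(v))$ from below. A neighbour $u$ of $v$ yields no edge of $H$ only if (i) $\chi(u)=\chi(v)$, or (ii) some other neighbour of $v$ has colour $\chi(u)$ and precedes $u$ in $\pi_v$, or (iii) some neighbour of $u$ other than $v$ has colour $\chi(v)$ and precedes $v$ in $\pi_u$. A one-line union bound gives $\Pr(\text{(i)})=1/C$ and $\Pr(\text{(ii)}),\Pr(\text{(iii)})\le(\Delta-1)/C\le 1/(2\a)$, using that $\Delta$ is small next to $C$ and that $\a>16$; hence $\E(d_H(v))\ge(1-O(1/\a))\,d_G(v)$, comfortably above $\tfrac12 d_G(v)$.

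For concentration I would take $\beta=\tfrac12$ and write $d_G(v)-d_H(v)\le B_a(v)+B_b(v)+B_c(v)$, where $B_a,B_b,B_c$ count the neighbours of $v$ of types (i),(ii),(iii). It is enough to show each $B_\bullet(v)<\tfrac16 d_G(v)$ with probability at least $1-\Delta^{-5}$: then $A_v:=\{d_H(v)<\tfrac12 d_G(v)\}$ has $\Pr(A_v)\le 3\Delta^{-5}$, and since $A_v$ depends only on the colours and orders within distance $2$ of $v$ it is mutually independent of all but $O(\Delta^4)$ of the other $A_u$, so Lemma~\ref{lem:LLL} applies. Here $\E B_a\le 1/(2\a)$ and, conditioned on $\chi(v)$, $B_a$ is a sum of independent indicators; $B_b$ equals $\sum_{c}(|N(v)\cap\chi^{-1}(c)|-1)^{+}$ and so is a function of the colours on $N(v)$ changing by at most $2$ per coordinate, with mean at most $d_G(v)/(2\a)$; and $B_c$ is at most $\#\{u\in N(v):u\text{ does not select }v\}$, which, conditioned on $\chi$, is a sum of independent indicators (governed by the independent $\pi_u$) with conditional mean at most $L(\chi):=\sum_{w\ne v}|N(v)\cap N(w)|\,\mathbf 1\{\chi(w)=\chi(v)\}$. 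In the first two cases the Chernoff bound, respectively the Simple Concentration Bound, gives a tail of $e^{-\Omega(d_G(v))}$, and since $d_G(v)\ge\delta\ge\log^2\Delta$ this is below $\Delta^{-5}$ once $\Delta$ is large.

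The hard part will be the last piece. One has $\E L(\chi)\le d_G(v)(\Delta-1)/C\le d_G(v)/(2\a)$, but $L(\chi)$ is a sum of independent indicators with weights $|N(v)\cap N(w)|$ that can be as large as $d_G(v)$, so it need not concentrate: if $G$ has a vertex $w$ sharing almost all of $N(v)$ with $v$ — a true twin, say — then $L(\chi)\ge d_G(v)-o(d_G(v))$ already when $\chi(w)=\chi(v)$, an event of probability about $1/(\a\Delta)$, far too large for the Local Lemma. To handle this I would, before colouring, quarantine the set $V'$ of vertices lying in such dense configurations — say, those with fewer than $\lceil\a\Delta\rceil$ vertices within distance $2$ in $G$ — split $[C]$ into two palettes of size $\lceil\a\Delta\rceil$, greedily colour $G[V']$ so that vertices of $V'$ at distance at most $2$ in $G$ receive distinct colours from the first palette (possible by the bound on $2$-balls) while keeping all of $E(G[V'])$ in $H$, and run the random procedure on $G-V'$ with the second palette; disjointness of the palettes removes all interference between the two parts. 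One would still have to control the blocking phenomenon above inside $V\setminus V'$ — where $2$-balls are large but heavy common neighbourhoods can still arise — and this, rather than the routine Chernoff and Local-Lemma bookkeeping, is where I expect the real work to lie.
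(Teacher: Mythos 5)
Your construction of $H$ and $\chi$ is sound as far as it goes (properness and $1$-frugality are automatic, and the expectation and the terms $B_a,B_b$ are routine), and you have correctly located the crux in the term $B_c$, i.e.\ in the non-concentration of $L(\chi)$. But the fix you sketch does not close that gap, and the gap is the whole theorem. Quarantining vertices with small $2$-balls does not exclude heavy common neighbourhoods: take $v$ together with $t=\Delta/4$ vertices $w_1,\dots,w_t$ all adjacent to the same set $S$ of $\Delta/2$ vertices, and give each $s\in S$ a further $\Delta/4$ neighbours spread over $\Theta(\Delta^2/\log^2\Delta)$ new vertices of degree $\lceil\log^2\Delta\rceil$ (so the minimum degree hypothesis holds). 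Then $v$ and every $w_i$ have $2$-balls of size $\gg \alpha\Delta$, so none of them is quarantined and all are coloured randomly. With $j=\lceil 1/\beta\rceil$, the probability that at least $j$ of the $w_i$ receive the colour $\chi(v)$ is a constant $c(\alpha,\beta)>0$ independent of $\Delta$, and conditioned on this each $u\in S$ selects $v$ with probability at most $1/(j+1)<\beta$, independently over $u$; hence $d_H(v)<\beta d_G(v)$ with probability bounded away from $0$. This is not merely an obstruction to the Local Lemma (which here would need bad probabilities of order $\Delta^{-4}$): on a disjoint union of many copies of this gadget your one-shot construction fails with probability tending to $1$, so no sharper concentration or dependency bookkeeping can rescue the scheme as stated. (Your quarantine also leaves the edges between $V'$ and $V\setminus V'$ out of the degree accounting on both sides, but that is secondary.)

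The missing idea --- and the heart of the paper's proof --- is a repair mechanism rather than a structural exclusion. The paper first passes to a bipartite spanning subgraph $H_0$ given by a maximum cut (degrees only halve) and colours one side at a time, so all conflicts are paths $a$--$b$--$a'$ across the cut. In the probabilistic phase, a vertex $a$ whose random colour would cost it at least $d_{H_0}(a)/\sqrt{\alpha}$ incident edges is \emph{uncoloured}; in a second, deterministic phase each such vertex is recoloured with the colour minimising its loss, and an averaging argument over the $\lceil\alpha\Delta\rceil$ colours caps that loss at $d_{H_0}(a)/\alpha$. Thus the loss suffered by a ``heavy'' vertex (your $v$ or its near-twins) is bounded deterministically and charged to that vertex itself, instead of being a low-probability event one must avoid. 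The collateral damage to the opposite side is bounded by $Y_b+Z_b'$, where the auxiliary ``dangerous colour'' count $Z_b'$ is designed so that, conditioned on the colours outside $N_{H_0}(b)$, both quantities are bounded-difference functions of the at most $\Delta$ colours on $N_{H_0}(b)$; the Simple Concentration Bound then gives tails $e^{-\Omega(\delta)}=o(\Delta^{-6})$, exactly what the Local Lemma with dependence radius $6$ needs, and a second round colours the other side with a fresh palette. If you wish to keep your one-shot framework, you would need an analogous uncolour-and-recolour (or edge-reinstatement) step for precisely the vertices your $L(\chi)$ analysis flags; without such a repair phase the approach fails.
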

\begin{proof}
First observe that there exists a spanning bipartite subgraph $H_0$ such that $d_{H_0}(v)\geq d_G(v)/2$ for every vertex $v\in V$. (Consider $H_0$ to be a subgraph induced by a maximum edge-cut. This subgraph is clearly bipartite, so let $V=A\cup B$ denote its bipartition. Suppose that $d_{H_0}(v)< d_G(v)/2$ for some $v\in V$. We can assume that $v\in A$. Then the number of edges between $A\setminus\{v\}$ and $B\cup\{v\}$ is strictly larger than the number of edges between $A$ and $B$, contradicting the maximum edge-cut assumption.)
While colouring $V$, we also construct $H$ as a subgraph of $H_0$, by sequentially removing edges.
The colouring has two consecutive rounds, the first of which colours the vertices of $A$, the second colours $B$.

We begin by describing the first round colouring $A$; this itself has two phases, a probabilistic one followed by a deterministic one.

\begin{itemize}
\item Phase I. Colour each vertex $a\in A$ with a colour $\chi_0(a)$ chosen uniformly at random from $[\lceil\a\Delta\rceil]$. From $\chi_0$ we obtain a partial colouring $\chi_1$ of $A$ as follows. We uncolour a vertex $a\in A$ if
\begin{align}\label{cond:uncolour}
|\{b\in N_{H_0}(a):\;\exists a'\in N_{H_0}(b)\setminus\{a\},\chi_0(a')=\chi_0(a)\}|\geq\frac{d_{H_0}(a)}{\sqrt{\a}};
\end{align}
that is, if $a$ certifies that too many of its neighbours have another neighbour in $A$ with colour $\chi_0(a)$. Otherwise, let $\chi_1(a)=\chi_0(a)$ and remove all edges from $a$ to $b\in N_{H_0}(a)$ where $b$ is incident to $a'$ with $a\neq a'$ and $\chi_0(a')=\chi_0(a)$.  Let $H_1$ be the subgraph obtained after removing all these edges. We have ensured that, for any $\chi_1$-coloured $a\in A$ and any $b\in N_{H_1}(a)$, $a$ is the only neighbour of $b$ coloured $\chi_1(a)$.

We stress that condition~\eqref{cond:uncolour} is always checked on the initial colouring $\chi_0$ and that all the vertices that are uncoloured lose their colour simultaneously.

\item Phase II. Order the uncoloured vertices $a_1,\dots, a_{s-1}$. For $i=1,2,\dots,s-1$, let $c\in [\lceil \a \Delta\rceil]$ be the colour minimising
\[
|\{b\in N_{H_i}(a_i):\;\exists a'\in N_{H_i}(b)\setminus\{a_i\},\chi_i(a')=c\}|\;.
\]
 Delete from $H_i$ all edges $a_i b$ such that there exists $a'\in N_{H_i}(b)\setminus\{a_i\}$ with $\chi_i(a')=c$ and call the resulting subgraph $H_{i+1}$.
Let $\chi_{i+1}$ be the partial colouring obtained from $\chi_i$ by also assigning $a_i$ the colour $c$.
\end{itemize}

First we show that $d_{H_s}(a)$ is large for every $a\in A$.
\begin{indexedclaim}\label{clm:deg_in_A}
For every $a\in A$
\[
d_{H_s}(a)\geq\left(1-\frac{1}{\sqrt{\a}}\right)d_{H_0}(a)\;.
\]
\end{indexedclaim}
\begin{proof}
Note that we only delete edges incident to $a$ at a step in the procedure when $a$ retains its colour.
If $a\in A$ retained its colour in the probabilistic phase, we can conclude $d_{H_s}(a)= d_{H_1}(a)\geq (1-1/\sqrt{\a})d_{H_0}(a)$, since by~\eqref{cond:uncolour}, conditioned on retaining the colour $\chi_0(a)$, we delete at most $d_{H_0}(a)/\sqrt{\a}$ edges incident to $a$. Otherwise, $a=a_i$ for some $i\in [s-1]$, coloured in the deterministic phase, and since there are at most $d_{H_0}(a_i)\Delta$ edges incident to $N_{H_i}(a_i)$, there exists a colour $c\in [\lceil\a\Delta\rceil]$ such that
\[
|\{b\in N_{H_i}(a_i):\;\exists a'\in N_{H_i}(b)\setminus\{a_i\},\chi_0(a')=c\}|\leq\frac{d_{H_0}(a_i)\Delta}{\lceil\a\Delta\rceil}\leq \frac{d_{H_0}(a_i)}{\a}\;.
\]
Thus $d_{H_s}(a_i)=d_{H_{i+1}}(a_i)\geq (1-1/\a)d_{H_0}(a_i)\geq (1-1/\sqrt{\a})d_{H_0}(a_i)$.
\end{proof}

\begin{indexedclaim}\label{clm:deg_in_B}
There exist a spanning subgraph $H'$ and a $\lceil\a \Delta\rceil$-colouring $\chi'$ of $A$ such that for every $a\in A$
\[
d_{H'}(a)\geq\left(1-\frac{1}{\sqrt{\a}}\right)d_{H_0}(a)\;,
\]
and for every $b\in B$
\[
d_{H'}(b)\geq\left(1-\frac{4}{\sqrt{\a}}\right) d_{H_0}(b)\;,
\]
and $N_{H'}(b)$ is rainbow in $\chi'$.
\end{indexedclaim}
\begin{proof}

Note that the subgraph $H_s$ and colouring $\chi_s$ we have constructed are random objects, so it suffices to show that they satisfy the required properties with positive probability (when $\Delta$ is large enough).
Note that two of the properties are guaranteed by the construction of $H_s$ and $\chi_s$ (partly using Claim~\ref{clm:deg_in_A}). It only remains to check the degree condition from $B$.

Let $b\in B$.
Observe that the number of coloured neighbours of $b$ under the colouring $\chi_s$ is at least the number of coloured neighbours of $b$ under $\chi_{s-1}$ (and so on), since in the deterministic phase an edge $ab$ can only be deleted in a step when $a$ is coloured.
Thus we can show that $d_{H_s}(b)$ is large by showing that the degree of $b$ in $H_1$ to the set of vertices coloured by $\chi_1$ is large.

For a given $a\in N_{H_0}(b)$, let $E_1$ be the event that there exists $a'\in N_{H_0}(b)\setminus\{a\}$ such that $\chi_0(a')=\chi_0(a)$ and let $E_2$ be the event that $a$ becomes uncoloured (as governed by the condition in~\eqref{cond:uncolour}).
Let $Y_b$ be the random variable that counts the number of vertices $a\in N_{H_0}(b)$ for which $E_1$ holds. Let $Z_b$ be the random variable that counts the number of vertices $a\in N_{H_0}(b)$ for which $E_2$ holds but $E_1$ does not. Notice that these random variables count disjoint sets of vertices. By the observation of the previous paragraph,
\[
d_{H_s}(b)\geq d_{H_0}(b)-Y_b-Z_b\;.
\]

We estimate $Z_b$ by studying another random variable.
We say that the colour $c$ is \emph{dangerous for $a$} if
\[
|\{b'\in N_{H_0}(a)\setminus\{b\}:\;\exists a'\in N_{H_0}(b')\setminus\{a\},\chi_0(a')=c\}|\geq\frac{d_{H_0}(a)}{\sqrt{\a}}-1\;.
\]
For a given $a\in N_{H_0}(b)$, let $E_3$ be the event that $a$ receives a dangerous colour.
Let $Z'_b$ be the random variable that counts the number of vertices $a\in N_{H_0}(b)$ for which $E_3$ holds but $E_1$ does not.

The following observation is important: if $a$ is counted by $Z_b$ it means that $a$ becomes uncoloured and $\chi_0(a)$ is a unique colour within $N_{H_0}(b)$. Then $a$ must have been assigned a dangerous colour since for every vertex $a'\in N_{H_0}(b)\setminus\{a\}$, $\chi_0(a')\neq\chi_0(a)$, and thus $a'$ does not change the number of $b'\in N_{H_0}(a)\setminus\{b\}$ that have colour $\chi_0(a)$ in $N_{H_0}(b')\setminus\{a\}$. Hence $Z_b\leq Z_b'$ and it is enough to verify that not too many vertices receive dangerous colours.

We are going to show that $X_b= Y_b+Z'_b$ is concentrated given any fixed colouring in $A\setminus N_{H_0}(b)$. This, together with an upper bound on the conditional expectation of $X_b$, suffices to establish an upper bound on $X_b$ that holds unconditionally. During the rest of the proof, we will assume that all the random variables are conditioned to the colouring in $A\setminus N_{H_0}(b)$.

First we deal with the expected value of $Y_b$. Consider $a\in N_{H_0}(b)$. Observe that at most $d_{H_0}(b)-1\leq\Delta$ colours appear in $N_{H_0}(b)\setminus\{a\}$ under the random colouring $\chi_0$. Then the probability that $a$ does not have a unique colour in $N_{H_0}(b)$ is at most
$(d_{H_0}(b)-1)/\lceil\a\Delta\rceil\leq 1/\a$, and so $\E(Y_b)\leq d_{H_0}(b)/\a$.

Second we compute the expected value of $Z'_b$. Since the maximum degree of $H_0$ is $\Delta$ and a colour is considered dangerous if at least $d_{H_0}(a)/\sqrt{\a}-1$ many vertices $b'\in N_{H_0}(a)\setminus\{b\}$ already have it in $N_{H_0}(b')\setminus \{a\}$, there are at most  $d_{H_0}(a)\Delta/(\Delta/\sqrt{\a}-1)\leq 2\sqrt{\a}\Delta$ dangerous colours for $a$. Thus $a$ receives a dangerous colour with probability at most $2\sqrt{\a}\Delta/\lceil\a\Delta\rceil\leq2/\sqrt{\a}$. So $\E(Z_b')\leq 2d_{H_0}(b)/\sqrt{\a}$.

Then
\[
\E(X_b)=\E(Y_b)+\E(Z_b')\leq\left(\frac{1}{\a}+\frac{2}{\sqrt{\a}}\right)d_{H_0}(b)\leq\frac{3d_{H_0}(b)}{\sqrt{\a}}\;.
\]

We can now apply the Simple Concentration Bound to show that $X_b$ is concentrated with polynomially small probability. Note that changing the colour of $a\in N_{H_0}(b)$ can change by at most two the value of $X_b$:
\begin{itemize}
\item[--]  it can change by at most two the number of vertices that are unique in their colour class  (including $a$ itself), and
\item[--]  it can change by at most one the number of vertices that receive a dangerous colour and do not satisfy $E_1$, since the colour classes are prescribed by the colouring given to $A\setminus N_{H_0}(b)$.
\end{itemize}
Moreover, $X_b$ conditioned on the colouring of $A\setminus N_{H_0}(b)$ is determined by at most $d_{H_0}(b)$ many different trials. By the Simple Concentration Bound with the choices $c=2$ and $n=d_{H_0}(b)$, we have that $X_b$ conditioned to any colouring in $A\setminus N_{H_0}(b)$ is unlikely to be large:
\[
\Pr\left(X_b\geq\frac{4d_{H_0}(b)}{\sqrt{\a}}\right)\leq \Pr\left(X_b-\E(X_b)\geq\frac{d_{H_0}(b)}{\sqrt{\a}}\right)\leq 2\exp\left(-\frac{d_{H_0}^2(b)}{8\a\cdot d_{H_0}(b)}\right) =e^{-\Omega(d_{H_0}(b))} = o(\Delta^{-6})\;.
\]
In the last equality we used that $d_{H_0}(b)=\Omega(\log^2{\Delta})$. Thus the previous inequality also holds for the unconditioned random variable $X_b$.

Observe that $X_b$ depends on the vertices at distance at most $3$ from $b$; the fact that $a\in N_{H_0}(b)$ retains its colour depends only on the colours assigned to vertices at distance $2$ from $a$. Thus every event corresponding to $X_b$ is mutually independent from the set of events corresponding to $X_{b'}$ with $b'$ at distance more than $6$ from $b$, the Lov\'asz Local Lemma yields that with positive probability $X_b\leq 4d_{H_0}(b)/\sqrt{\a}$ for every $b\in B$. This completes the proof of the claim.
\end{proof}

In the second round, we can apply the same argument to colour the vertices of $B$ using the subgraph $H'$. By Claim~\ref{clm:deg_in_B} and recalling that $\alpha > 16$, this graph has minimum degree at least $(1-4/\sqrt{\a})\delta(H_0)=\Omega(\log^2{\Delta})$ and maximum degree at most $\Delta$. So we can apply the same procedure (and claims) to colour $B$ with a new set of $\lceil\a\Delta\rceil$ colours. Combined with the colouring $\chi'$ of $A$, in this way we obtain a subgraph $H\subseteq H'$ and a $(2\lceil\a\Delta\rceil)$-colouring $\chi$ of $V$ such that
\begin{itemize}
\item for every $v\in V$
\[
d_H(v)\geq\left(1-\frac{4}{\sqrt{\a}}\right)^2 d_{H_0}(v)\geq\left(1-\frac{4}{\sqrt{\a}}\right)^2\frac{d_G(v)}{2}\;,\text{ and}
\]
\item $\chi$ is a $1$-frugal proper colouring of $H$.
\end{itemize}
This proves the theorem with the choice $\beta=\frac{1}{2}\left(1-4/\sqrt{\a}\right)^2$.
\end{proof}

\begin{corollary}\label{cor:decomp}
 Let $G$ be a graph with maximum degree $\Delta$ and minimum degree $\delta\geq\log^2{\Delta}$ with $\Delta$ sufficiently large. For every $\a > 16$, there exist $\beta=\beta(\a) > 0$ and $\ell\leq\lceil2\sqrt{2\lceil\a\Delta\rceil}\rceil$ many $C_4$-free disjoint spanning subgraphs $G_1,\dots,G_{\ell}$ such that for all $v\in V$
\[
\sum_{i=1}^{\ell} d_{G_i}(v)\geq\beta d_G(v)\;.
\]
\end{corollary}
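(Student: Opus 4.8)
The plan is to deduce Corollary~\ref{cor:decomp} directly from Theorem~\ref{thm:v4} by applying the classical decomposition of $K_{\Delta+1}$ into few $C_4$-free subgraphs, as recorded in~\eqref{eq:clique_decomp}. Given $G$ with $\delta(G)\ge\log^2\Delta$ and $\Delta$ large, Theorem~\ref{thm:v4} provides a spanning subgraph $H$ with $d_H(v)\ge\beta d_G(v)$ for all $v$, together with a vertex $(2\lceil\a\Delta\rceil)$-colouring $\chi$ that is simultaneously $1$-frugal and proper in $H$. The key observation is that the pair $(H,\chi)$ encodes an \emph{edge}-colouring of $H$: colour each edge $uv\in E(H)$ by the unordered pair $\{\chi(u),\chi(v)\}$. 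Since $\chi$ is proper in $H$, this pair always has two distinct entries, and since $\chi$ is $1$-frugal, for any vertex $v$ the colours on the edges at $v$ are all distinct (the neighbourhood of $v$ is rainbow). Hence the graph formed by the edges receiving a fixed pair $\{i,j\}$ is a matching — in particular it is $C_4$-free.

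First I would check that this edge-colouring has the right number of colours: the colour set is contained in the set of unordered pairs from $[2\lceil\a\Delta\rceil]$, but we can do much better by invoking~\eqref{eq:clique_decomp}. Think of each vertex colour in $[2\lceil\a\Delta\rceil]$ as a vertex of $K_{2\lceil\a\Delta\rceil}$ (or $K_{\Delta'+1}$ with $\Delta'=2\lceil\a\Delta\rceil-1$); the pair $\{\chi(u),\chi(v)\}$ of an edge $uv\in E(H)$ is then an edge of that complete graph. Take a $C_4$-free colouring of $K_{2\lceil\a\Delta\rceil}$ with $\ell\le\lceil 2\sqrt{2\lceil\a\Delta\rceil}\rceil$ colours, guaranteed by~\eqref{eq:clique_decomp} for $\Delta$ large enough, and pull it back: for $1\le i\le\ell$, let $G_i$ be the spanning subgraph of $G$ whose edges are those $uv\in E(H)$ such that the edge $\chi(u)\chi(v)$ of $K_{2\lceil\a\Delta\rceil}$ receives colour $i$. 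These $G_i$ are pairwise edge-disjoint and their union is exactly $H$, so $\sum_{i=1}^\ell d_{G_i}(v)=d_H(v)\ge\beta d_G(v)$ for every $v$, as required.

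The only remaining point is that each $G_i$ is $C_4$-free. Suppose $G_i$ contained a $4$-cycle $w_1w_2w_3w_4w_1$. Applying $\chi$ vertex-wise yields a closed walk $\chi(w_1)\chi(w_2)\chi(w_3)\chi(w_4)\chi(w_1)$ in $K_{2\lceil\a\Delta\rceil}$, all of whose four edges have colour $i$; since colour class $i$ in $K_{2\lceil\a\Delta\rceil}$ is $C_4$-free, this closed walk cannot be a $4$-cycle, so it must degenerate, which forces $\chi(w_1)=\chi(w_3)$ or $\chi(w_2)=\chi(w_4)$. But $w_1$ and $w_3$ are both neighbours of $w_2$ in $H$, so $1$-frugality of $\chi$ gives $\chi(w_1)\ne\chi(w_3)$; symmetrically $\chi(w_2)\ne\chi(w_4)$ — a contradiction. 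Hence each $G_i$ is $C_4$-free, and the corollary follows with the same $\beta=\frac12(1-4/\sqrt{\a})^2$ as in Theorem~\ref{thm:v4}.

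I expect the main (modest) obstacle to be bookkeeping the degenerate-cycle case in the last paragraph cleanly: one must argue that a colour class of $K_{2\lceil\a\Delta\rceil}$ being $C_4$-free rules out not only genuine $4$-cycles in the image but also the possibility that the image is a shorter closed walk whose preimage is nonetheless a genuine $C_4$ in $G_i$. The $1$-frugal condition is exactly what is needed to exclude those degeneracies, so the proof hinges on using both properties of $\chi$ — properness to make $\{\chi(u),\chi(v)\}$ a legitimate edge of the complete graph, and $1$-frugality to transfer $C_4$-freeness back from $K_{2\lceil\a\Delta\rceil}$ to $G$. Everything else is a direct substitution of the bound~\eqref{eq:clique_decomp} with $\Delta$ replaced by $2\lceil\a\Delta\rceil-1$.
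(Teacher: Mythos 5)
Your proposal is correct and follows essentially the same route as the paper: apply Theorem~\ref{thm:v4} to get $H$ and the $1$-frugal proper colouring $\chi$, pull back a $C_4$-free decomposition of the complete graph on $2\lceil\a\Delta\rceil$ vertex-colours given by~\eqref{eq:clique_decomp}, and use properness plus $1$-frugality to show every $4$-cycle of $H$ is rainbow, so a $4$-cycle in some $G_i$ would force a $4$-cycle in the corresponding colour class of the complete graph. Your explicit handling of the degenerate closed-walk cases is just a more detailed spelling-out of the paper's ``all $4$-cycles in $H$ are rainbow'' step.
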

\begin{proof}
We use the subgraph $H$ and the colouring $\chi$ guaranteed by Theorem~\ref{thm:v4} to find many $C_4$-free spanning subgraphs. By~\eqref{eq:clique_decomp}, for any sufficiently large $t$ there exists a decomposition of $K_t$ into $C_4$-free subgraphs $\G_1,\dots,\G_{\lceil2\sqrt{t}\rceil}$. Consider $t=2\lceil\a\Delta\rceil$ and for any $i\in[\lceil2\sqrt{t}\rceil]$ construct $G_i$ as follows:
\begin{itemize}
\item $V(G_i)=V(G)$ and
\item $uv\in E(G_i)$ if and only if $uv\in E(H)$ and $\chi(u)\chi(v)\in E(\G_i)$.
\end{itemize}

These subgraphs $G_i$ are disjoint and, since $H$ contains no monochromatic edge, each edge of $H$ appears in exactly one subgraph $G_i$. So the minimum degree condition for $H$ implies the  minimum degree sum condition demanded here. Moreover, each $G_i$ is $C_4$-free: by $\chi$ being $1$-frugal and proper, all $4$-cycles in $H$ are rainbow; and if $G_i$ contains such a $4$-cycle $C$, then the colours $\chi(C)$ form a $4$-cycle in $\G_i$.
\end{proof}

Besides the above, we need the following bound on arboricity by degeneracy (which follows, for instance, from the folkloric Proposition~3.1 of~\cite{NeOs12} combined with an old result of Nash-Williams~\cite{Nas64}).

\begin{lemma}\label{lem:arb}
Let $G = (V,E)$ be a graph with an ordering $(v_1,\dots,v_n)$ of $V$ which satisfies that $|N(v_i)\cap\{v_{i+1},\dots,v_{n}\}|\leq k$ for all $i\in [n]$.
Then $E$ can be partitioned into $k$ parts such that no part contains a cycle of $G$.
\end{lemma}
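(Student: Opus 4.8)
The plan is to combine the given degeneracy-type ordering with the Nash-Williams arboricity theorem. The Nash-Williams formula states that the arboricity of a graph $G$ — the minimum number of forests into which $E$ can be partitioned — equals $\max_{H\subseteq G}\lceil |E(H)|/(|V(H)|-1)\rceil$, the maximum taken over all subgraphs $H$ on at least two vertices. So the whole task reduces to showing that the hypothesis on the ordering forces $|E(H)|\le k(|V(H)|-1)$, in fact even $|E(H)|\le k(|V(H)|-1)$ with room to spare, for every subgraph $H$.

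First I would fix an arbitrary subgraph $H$ with vertex set $S\subseteq V$, and restrict the given ordering of $V$ to $S$, obtaining an ordering $(v_{i_1},\dots,v_{i_m})$ of $S$ with $m=|S|$. The key step is to charge each edge of $H$ to its earlier endpoint in this restricted order: every edge $v_{i_a}v_{i_b}$ with $a<b$ is charged to $v_{i_a}$. Since $\{v_{i_{a+1}},\dots,v_{i_m}\}\subseteq\{v_{i_a+1},\dots,v_n\}$, the hypothesis gives that $v_{i_a}$ has at most $k$ neighbours among the later vertices of $S$, hence is charged at most $k$ edges; and the last vertex $v_{i_m}$ is charged none. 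Summing over $S$ yields $|E(H)|\le k(m-1)$. Plugging this into the Nash-Williams formula gives arboricity at most $k$, i.e.\ $E$ partitions into $k$ forests, and a forest contains no cycle of $G$, which is exactly the claimed conclusion. (This is precisely the content of the folkloric Proposition~3.1 of~\cite{NeOs12} together with~\cite{Nas64}, as the statement already advertises.)

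I do not expect a genuine obstacle here — the result is classical and the argument above is short — but if one wanted a self-contained proof avoiding Nash-Williams, the natural alternative is a direct greedy construction: process the vertices $v_n, v_{n-1},\dots,v_1$ in reverse order, and when $v_i$ is processed, its at most $k$ back-edges to $\{v_{i+1},\dots,v_n\}$ must be distributed among the $k$ parts so that no cycle is ever closed. The delicate point in that approach is arguing that a valid assignment always exists (one must ensure that the partial structure in each part stays acyclic), which is where invoking Nash-Williams is cleaner; hence I would simply cite it as the statement suggests.
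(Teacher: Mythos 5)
Your proof is correct and takes exactly the route the paper itself indicates: the restricted-ordering charging argument shows $|E(H)|\le k(|V(H)|-1)$ for every subgraph $H$, and the Nash--Williams arboricity theorem then gives a partition into $k$ forests, which is precisely the combination of the folkloric degeneracy-to-density observation with Nash--Williams that the paper cites in lieu of a written proof. Nothing is missing.
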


\begin{proof}[Proof of Theorem~\ref{thm:main}]
Let $G=(V,E)$ be a graph with maximum degree $\Delta$ and fix $\a> 16$.
We perform the following procedure.
\begin{enumerate}
	\item Let $\tilde{G}^0=G$ and $G' = (V,\emptyset)$.
	\item Start with $i=0$ and repeat the following until $i=\tau$, where $\tau$ is the smallest such that $\Delta(\tilde{G}^{\tau})\le\log^2{\Delta}$:
	\begin{enumerate}
		\item obtain $G^i$ from $\tilde{G}^i$ by successively removing all vertices of degree less than $\log^2{\Delta}$, and adding all of their incident edges to $G'$;
		\item apply Corollary~\ref{cor:decomp} to $G^i$ to obtain the disjoint $C_4$-free subgraphs $G^i_1, G^i_2,\dots, G^i_{\lceil2\sqrt{2\lceil\a\Delta(G^i)\rceil}\rceil}$;
		\item set $\tilde{G}^{i+1}=(V(G^i),E(G^i)\setminus\bigcup_j E(G^i_j))$ and then increment $i$.
\end{enumerate}
\item Add all edges of $\tilde{G}^{\tau}$ to $G'$.
\end{enumerate}

We can always apply Corollary~\ref{cor:decomp} at each iteration since in Step 2(a) we forced the minimum degree of $G^i$ to be at least $\log^2{\Delta}\ge\log^2{\Delta(G^i)}$.

Let us see that the maximum degree $\Delta(G^{i+1})$ is significantly smaller than $\Delta(G^i)$. By Corollary~\ref{cor:decomp}, the removal of $C_4$-free subgraphs at iteration $i$ removes at least $\beta d_{G^i}(v)$ edges incident to $v\in V$. Thus
\begin{align}\label{eq:2}
\Delta(G^{i+1})\leq\Delta(\tilde{G}^{i+1})\leq (1-\beta)\Delta(G^i)\leq (1-\beta)^i\Delta\;.
\end{align}
This implies that the procedure is guaranteed to stop after $\tau=O(\log\Delta)$ iterations.

Step 2(b) of each iteration generates a number of disjoint spanning $C_4$-free subgraphs, each of which we give a new colour. During the $i$th iteration we produce $\lceil2\sqrt{2\lceil\a\Delta(G^i)\rceil}\rceil < 2\sqrt{2\a\Delta(G^i)}+4$ such subgraphs, so by~\eqref{eq:2} and the bound on the number $\tau$ of iterations we produce at most
\begin{align}\label{eq:1}
O(\log \Delta)+
2\sqrt{2\a\Delta}+2\sqrt{2\a(1-\beta)\Delta}+2\sqrt{2\a(1-\beta)^2\Delta}+\dots = \frac{2\sqrt{ 2\a}}{1-\sqrt{1-\beta}}\cdot\sqrt{\Delta}+O(\log \Delta)
\end{align}
$C_4$-free subgraphs throughout all iterations.

It only remains to upper bound the number of colours needed in the remainder graph $G'$. By construction, $G'$ admits a degeneracy ordering satisfying the hypothesis of Lemma~\ref{lem:arb} for $k=\log^2{\Delta}$. Thus we can partition its edges into at most $\log^2{\Delta}$ acyclic (and thus $C_4$-free) subgraphs.
By~\eqref{eq:1} we obtain a partition of $E$ into $O(\sqrt{\Delta})$ $C_4$-free subgraphs in total.
This completes the proof of the theorem.
\end{proof}

\section*{Acknowledgement}
We thank the referees for their helpful comments and suggestions.

\bibliography{edgeC4free}

\bibliographystyle{amsplain}

\end{document}